\newtheorem{theorem}{Theorem}[section]
\newtheorem{remark}[theorem]{Remark}
\newtheorem{lemma}[theorem]{Lemma}
\newtheorem{prop}[theorem]{Proposition}
\newtheorem{cor}[theorem]{Corollary}
\renewcommand{\c}{\mathfrak{C}}
\newcommand{\tp}{\mathrm{tp}}
\newcommand{\dcl}{\mathrm{dcl}}
\newcommand{\dcli}{\mathrm{dcl}^{\mathrm{eq}}}
\newcommand{\acli}{\mathrm{acl}^{\mathrm{eq}}}
\newcommand{\g}[1]{\mathfrak{#1}}
\newcommand{\e}[1]{#1^\mathrm{eq}}
\def\Ind#1#2{#1\setbox0=\hbox{$#1x$}\kern\wd0\hbox to 0pt{\hss$#1\mid$\hss}
\lower.9\ht0\hbox to 0pt{\hss$#1\smile$\hss}\kern\wd0}
\def\ind{\mathop{\mathpalette\Ind{}}}
\def\Notind#1#2{#1\setbox0=\hbox{$#1x$}\kern\wd0\hbox to 0pt{\mathchardef
\nn=12854\hss$#1\nn$\kern1.4\wd0\hss}\hbox to
0pt{\hss$#1\mid$\hss}\lower.9\ht0 \hbox to
0pt{\hss$#1\smile$\hss}\kern\wd0}
\def\nind{\mathop{\mathpalette\Notind{}}}
\newenvironment{proof}{\vspace{-0.25cm}
{\bf Proof}: }{\hfill $\Box$}
\begin{document}
\title{Stable forking and imaginaries}
\author{Enrique Casanovas\thanks{Partially  supported by the Spanish government grant MTM 2011-26840 and  the Catalan government grant 2014SGR-437.} and Joris Potier\\
University of Barcelona}
\date{August 7, 2015}
\maketitle

\begin{abstract}  We prove that a theory $T$ has stable forking if and only if $\e{T}$  has stable forking.
\end{abstract}

\section{Introduction}

We follow the standard conventions, $T$ is a complete theory of language $L$ and $\c$ is its  monster model.  A formula $\varphi(x,y)$ (where $x,y$ are disjoint tuples of variables)  is stable if there are not $(a_i\mid i<\omega)$ and $(b_i\mid i<\omega)$  such that $\models \varphi(a_i,b_j)$ if and only if $i<j$. 

It is said that $T$ has stable forking if whenever a type $p(x)\in S(B)$  forks over some subset  $A\subseteq B$, there is some stable formula $\varphi(x,y)\in L$ and some tuple $b\in B$ such that $\varphi(x,b)\in p(x)$  and $\varphi(x,b)$ forks over $A$. The stable forking conjecture says that every simple theory has stable forking.

\begin{remark}\label{0}
\begin{enumerate}
\item If $\varphi(x,y)$  is a boolean combination of stable formulas $\varphi_i(x_i,y_i)$   (where $x_i\subseteq x$,  $y_i\subseteq y$  and $x_i\cap y_j=\emptyset$)  then $\varphi(x,y)$ is stable.
\item If $\varphi(x,y)$ is stable, then $\varphi^{-1}(y,x)  =\varphi(x,y)$   (the same formula with the role of $x$, $y$  interchanged)  is stable.
\item If $\varphi(x,a)\equiv \psi(x,b)$  and $\psi(x,z)$ is stable, then for some $\mu(y)\in \tp(a)$,  $\varphi(x,y)\wedge \mu(y)$  is stable.
\item In order to check that $T$ has stable forking, it is enough to consider types over models.
\item If $\varphi(x,y)$ is stable and $p(x)\in S_\varphi(M)$, then   $p(x)$  is definable by a boolean combination of formulas $\varphi(m,y)$  for some tuples $m\in M$.  The canonical base of $p(x)$ is an imaginary $e$, the canonical parameter of (any) definition of $p(x)$ over $M$.  If $A\subseteq M$, then $p(x)$  forks over $A$ if and only if  $e\not\in\acli(A)$.  For any model $N\supseteq M$, $p(x)$ has a unique $e$-definable extension $p^\prime(x)\in S_\varphi(N)$.
\end{enumerate}
\end{remark}    
\begin{proof}  For \emph{1}, \emph{2} and \emph{5}  see  chapters 6 and 8 of~\cite{Casanovas09} or    chapter 1 of~\cite{Pill96b}. For \emph{3} and \emph{4}  see~\cite{KimPillay01}.
\end{proof}

If $\varphi(x,y)$ is stable and $p(x)$ is a $\varphi$-type over a model $M$, then $p$ does not fork over its canonical base, an imaginary $e\in \dcli(M)$. Therefore, if $T$ has stable forking then for every type $p(x)\in S(M)$ there is a subset $A\subseteq M$ such that $|A|\leq |T|$ and $p$ does not fork over $A$. This means that if $T$ has stable forking, then $T$ is simple.  More generally, A. Chernikov has shown (see  Proposition 4.14 in~\cite{Chernikov12}) that if $T$ has simple forking (meaning that forking is always witnessed by a simple formula) then $T$ is simple.

There is not much progress on the stable forking conjecture.  B.~Kim proved in~\cite{Kim99}  that simple one-based theories with elimination of hyperimagnaries have stable forking.   A.~Peretz in~\cite{Peretz06}  proved that types of  SU-rank two in $\omega$-categorical supersimple theories have stable forking. In~\cite{PalacinWagner13}  D.~Palac\'{\i}n and F.O.~Wagner have shown that supersimple CM-trivial $\omega$-categorical theories have stable forking.   Finally, let us mention that stable forking implies weak elimination of hyperimaginaries (see~\cite{KimPillay01}).

We will need the following lemma on algebraic quantification of a stable formula:

\begin{lemma}\label{1}  If $\varphi(x,y)\in L$  is stable and $\theta(v,x)\vdash \exists^{=n}x\theta(v,x)$, then $\psi(v,y)= \exists x(\theta(v,x)\wedge \varphi(x,y))$  is stable.
\end{lemma}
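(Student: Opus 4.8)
The plan is to prove the contrapositive: assuming $\psi(v,y)$ is unstable, I will exhibit a witness to the instability of $\varphi(x,y)$, contradicting the hypothesis. So suppose there are $(c_i\mid i<\omega)$ and $(b_j\mid j<\omega)$ with $\models\psi(c_i,b_j)$ if and only if $i<j$. The first observation is that the multiplicity bound $\exists^{=n}x\,\theta(v,x)$ controls the existential witnesses: whenever $\psi(c_i,b_j)$ holds there is some $x$ with $\theta(c_i,x)$, and then there are exactly $n$ such elements, say $a_i^1,\dots,a_i^n$. Since $\models\psi(c_i,b_{i+1})$ for every $i$, each $\theta(c_i,x)$ does have (exactly $n$) solutions, so these tuples are defined for all $i$.

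Next I would transfer the order configuration from $\psi$ to $\varphi$ by fixing a single witness index. For $i<j$ we have $\models\psi(c_i,b_j)$, so at least one of the finitely many solutions satisfies $\models\varphi(a_i^k,b_j)$; colour the pair $\{i,j\}$ (with $i<j$) by the least such $k\in\{1,\dots,n\}$. This is a colouring of $[\omega]^2$ with $n$ colours, so by Ramsey's theorem there is an infinite set $I=\{i_0<i_1<\cdots\}$ that is homogeneous of some colour $k_0$. Put $a'_m=a_{i_m}^{k_0}$ and $b'_m=b_{i_m}$.

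It then remains to check that $(a'_m)$ and $(b'_m)$ witness the order property for $\varphi$. For $m<m'$ the pair $(i_m,i_{m'})$ has colour $k_0$, so in particular $\models\varphi(a_{i_m}^{k_0},b_{i_{m'}})$, i.e.\ $\models\varphi(a'_m,b'_{m'})$. For $m\ge m'$ we have $i_m\ge i_{m'}$, hence $\not\models\psi(c_{i_m},b_{i_{m'}})$; since $a_{i_m}^{k_0}$ does satisfy $\theta(c_{i_m},\cdot)$, this forces $\not\models\varphi(a'_m,b'_{m'})$ (indeed $\varphi(a_{i_m}^k,b_{i_{m'}})$ fails for every $k$). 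Thus $\models\varphi(a'_m,b'_{m'})$ if and only if $m<m'$, contradicting the stability of $\varphi$.

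The one point that needs care, and which I regard as the crux, is the passage from ``for each $i<j$ some witness works'' to ``one fixed witness works along an infinite subsequence''. Without the hypothesis $\theta(v,x)\vdash\exists^{=n}x\,\theta(v,x)$ the set of candidate witnesses could be infinite and the finite-colour Ramsey argument would not apply; it is precisely the uniform finite multiplicity that reduces the choice to finitely many colours. The negative instances ($m\ge m'$) require no such choice, since the failure of $\psi$ eliminates every witness simultaneously.
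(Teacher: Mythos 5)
Your proof is correct and is essentially the paper's own argument: both extract the $n$ solutions of $\theta(c_i,x)$, colour each pair $i<j$ by a witness index $k\in\{1,\dots,n\}$ with $\models\varphi(a_i^k,b_j)$, apply Ramsey's theorem to fix a single index along an infinite set, and then read off the order property for $\varphi$ there, with the failure of $\psi$ handling the negative instances exactly as you describe. The only cosmetic differences are your contrapositive framing (the paper argues by contradiction) and your choice of the \emph{least} witness index where the paper picks an arbitrary one.
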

\begin{proof}  Assume  $\models \psi(a_i,b_j)$  iff  $i<j$.    For each $i<\omega$,   $\models \exists x \theta(a_i,x)$  and hence there  are different $c_i^1,\ldots,c_i^n$   such that  $\models \theta(a_i,c_i^k)$  for all $k=1,\ldots, n$.   Whenever  $i<j<\omega$   choose  some  $k_{ij}$  such that  $1\leq k_{ij}\leq n$  and $\models \theta(a_i, c_i^{k_{ij}})\wedge \varphi(c_i^{k_{ij}},b_j)$. By Ramsey's theorem, for some infinite $I\subseteq\omega$ there is some $k$  such that   $1\leq k\leq n$  and $\models \theta(a_i, c_i^{k})\wedge \varphi(c_i^{k},b_j)$   for  all  $i,j\in I$ such that $i<j$.  Then   for $i,j\in I$: $\models \varphi(c^k_i,b_j) $  iff  $i<j$,  which shows that $\varphi(x,y)$ is unstable. 
\end{proof}

The following remark is a stronger version of item 4 of Remark~\ref{0}, with a similar proof. We won't use it in this article.  The proof uses generalized $\varphi$-types (see chapter 6 of~\cite{Casanovas09}). The main point is that nonforking is transitive for these types (if $\varphi$ is stable) and over models they coincide with ordinary $\varphi$-types. The generalized $\varphi$-type of $a$ over $A$ is the set of all formulas in $\tp(a/A)$  which are equivalent to boolean combinations of $\varphi$-formulas over the monster model.

\begin{remark} If whenever a  type $p(x)\in S(N)$  forks over an elementary submodel $M\subseteq N$, there is an instance of a stable formula in $p(x)$  witnessing forking over $M$, then $T$ has stable forking.  
\end{remark}
\begin{proof} We can assume that $T$ is simple. Assume $A\subseteq B$  and $a\nind_A B$. Choose a model $M\supseteq A$  such that  $M\ind_A Ba$  and note that $a\nind_M B$.   Now choose a model $N\supseteq MB$  such that  $N\ind_{MB}a$  and note that  $a\nind_M N$.  By the assumption, there is a stable formula $\varphi(x,y)\in L$ and some tuple $n\in N$  such that $\models\varphi(a,n)$ and $\varphi(x,n)$  forks over $M$. Let $p(x)$  be the $\varphi$-type of $a$ over $N$  and $q(x)$ the generalized $\varphi$-type of $a$ over $B$.  Since $p(x)$  forks over $A$ but it does not fork over $B$,  $q(x)$  forks over $A$.  Hence there is some formula $\psi(x,z)\in L$ and some tuple $b\in B$  such that $\psi(x,b)$ is in $q(x)$ and forks over $A$.  By item 3 of Remark~\ref{0}, there is some formula $\mu(z)\in \tp(b)$  such that $\psi^\prime(x,z)=\psi(x,z)\wedge \mu(z)$ is stable. But $\models\psi^\prime(a,b)$  and $\psi^\prime(x,b)$ forks over $A$.
\end{proof}

\section{Main result}

In this section $T$ is a simple theory, but since stable forking implies simplicity, in fact it is not necessary to add this assumption to the propositions below. Hence forking and dividing is the same thing in $T$.

\begin{prop}\label{3}
If $T$ has stable forking, then $\e{T}$ has stable forking over real parameters.
\end{prop}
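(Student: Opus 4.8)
The plan is to code the imaginary by a real tuple, apply the stable forking of $T$ to the code, and then push the resulting stable $L$-formula back up to a stable $\e{L}$-formula about the imaginary. First I would reduce to a type over a real model: given $p(x)\in S(B)$ forking over a real set $A\subseteq B$, with $x$ of an imaginary sort, I would follow the pattern of the preceding remark and choose real models $M\supseteq A$ and $N\supseteq MB$ with $M\ind_A Ba$ and $N\ind_{MB}a$, so that for $a\models p$ we have $a\nind_M N$; the final descent from $N$ back to $B$ will again be carried out with generalized $\varphi$-types and item 3 of Remark~\ref{0}, exactly as there. Writing $a=f(c)$ with $c$ real and $f$ a $\emptyset$-definable function, the fact that $a\in\dcli(c)$ turns $a\nind_M N$ into $c\nind_M N$, and stable forking of $T$ applied to $\tp(c/N)$ produces a stable $\chi(u,y)\in L$ and a real $n\in N$ with $\models\chi(c,n)$ and $\chi(u,n)$ forking over $M$.

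The heart of the matter is to convert $\chi$, a real formula about the representative $c$, into a stable $\e{L}$-formula that lies in $\tp(a/N)$ and still forks over $M$. The naive candidate $\exists u\,(f(u)=x\wedge\chi(u,y))$ is of no use: the fibre $f^{-1}(a)$ is a whole infinite class of the equivalence relation defining the sort of $x$, so this is an existential quantification over an infinite set, and along an $M$-indiscernible sequence witnessing the dividing of $\chi(u,n)$ distinct representatives may satisfy distinct instances, so the quantified formula need be neither stable nor a dividing witness. The only quantifications I am allowed to keep are the algebraic ones of Lemma~\ref{1}, so I would replace the bad quantifier over representatives by an honest algebraic quantifier supplied by the canonical base.

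Concretely, I would look at the local type $q=\tp_\chi(c/N)\in S_\chi(N)$. Since the forking instance $\chi(u,n)$ lies in $q$, $q$ forks over $M$, so by item 5 of Remark~\ref{0} its canonical base $e\in\dcli(N)$ satisfies $e\notin\acli(M)$. The definition of $q$ over $N$ is, again by item 5, a boolean combination of instances of $\chi$ with canonical parameter $e$; written as a counting statement over a fixed finite tuple of realizations it is precisely an algebraic quantification of the stable formula $\chi$ (or of $\chi^{-1}$, using item 2), so Lemma~\ref{1} certifies that it is a stable $\e{L}$-formula. Reading this formula with $e$ in the object place gives a stable formula whose relevant instance forks over $M$ exactly because $e\notin\acli(M)$; I would then try to transport it to a stable formula in $\tp(a/N)$ using the $\emptyset$-definable link $a=f(c)$ together with items 2 and 3 of Remark~\ref{0}, and finally descend to a real parameter in $B$ through the generalized-$\varphi$-type argument of the preceding remark.

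The step I expect to be the real obstacle is exactly this transport: producing a stable $\e{L}$-formula that is genuinely a statement about $a$ (an element of $\tp(a/N)$), not merely about the auxiliary canonical base $e$, while keeping it stable and forking over $M$ at the same time. Because $f$ has infinite fibres one cannot read $a$ back off a single real representative, so the whole construction is forced to route the forking information through the finitary imaginary $e$ and then reconnect it to $a$; getting these three demands — membership in the type, stability, and forking — to hold simultaneously, and with a real parameter in the end, is where items 2, 3 and 5 of Remark~\ref{0} and the algebraic quantification of Lemma~\ref{1} have to be orchestrated with care.
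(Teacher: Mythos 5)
Your diagnosis of where the difficulty lies is exactly right, and your rejection of the naive formula $\exists u(f(u)=x\wedge\chi(u,y))$ matches the paper's own cautionary example in its last section. But your proof stops precisely at the point where the theorem begins. You route the forking through the canonical base of $\tp_\chi(c/N)$ --- call it $d$ --- and then hope to ``reconnect'' $d$ to the imaginary $a=f(c)$ using items 2, 3, 5 of Remark~\ref{0} and Lemma~\ref{1}. With an arbitrary representative $c$ this reconnection is not delicate; it is in general impossible. The canonical base of a stable local type over a model need not be algebraic over a single realization of the type (that property, for all types, is essentially one-basedness; in ACF the code of a curve is not algebraic over one generic point of the curve), so $d$ need have no algebraic relation to $c$, and a fortiori none to $a$. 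Lemma~\ref{1} can only quantify through an algebraic formula $\theta(v,w)$, and in your setup no such $\theta$ linking $a$ to $d$ exists to feed it. Your model choices make this worse: since you took $N\ind_{MB}a$, the imaginary $a$ is not in $\dcli(N)$, so the criterion of item 5 (forking over a subset of the model iff the canonical base is not algebraic over that subset) cannot even be applied with $a$ as base set.

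The missing idea is to rig the choice of the \emph{representative}, before stable forking of $T$ is ever invoked, so that the algebraic link comes for free. The paper fixes an $|A|^++\omega$-saturated model $M\supseteq B$ with $a\in\dcli(M)$ (in the paper's notation the imaginary is $e$ and the representative is $a$), and then chooses the representative $c$ of $a$ with $c\ind_a M$. Applying stable forking of $T$ to $c\nind_A B$ gives a stable $\delta(x,y)\in L$ and $b\in B$ with $\models\delta(c,b)$ and $\delta(x,b)$ forking over $A$; and now the local type $\tp_\delta(c/M)$ does not fork over $a$, so by item 5 of Remark~\ref{0} its canonical base $d$ lies in $\acli(a)$. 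This membership supplies exactly the algebraic formula $\theta(v,w)$ (with $\models\theta(a,d)$ and $\theta(v,w)\vdash\exists^{=n}w\,\theta(v,w)$) that Lemma~\ref{1} needs: writing $\chi(w,y)$ for a stable formula such that $\chi(d,y)$ is the $\delta$-definition of the local type (stability via item 3 of Remark~\ref{0}, not Lemma~\ref{1}), and $\chi'(w,y)=\chi(w,y)\wedge\mu(w)$ for suitable $\mu\in\tp(d)$, the formula $\varphi(v,y)=\exists w(\theta(v,w)\wedge\chi'(w,y))$ is stable, holds of $(a,b)$, and a counting argument (the paper's Claim 2) shows it divides over $A$ along the sequence witnessing the dividing of $\chi'(w,b)$. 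Note also that once $M$ is saturated and contains $B$, your two-model scaffolding and the descent through generalized $\varphi$-types become unnecessary: the parameter $b\in B$ is produced at the very first step, and a short automorphism argument using saturation (the paper's Claim 1) keeps all the forking over $A$ throughout.
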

\begin{proof} Let $A\subseteq B\subseteq \c$, let $e\in \e{\c}$ be an imaginary of sort $E$ and assume  $e\nind_A B$. Choose a $|A|^++\omega$-saturated model $M\supseteq B$ such that $e\in\dcli(M)$ and a representative $a$ of $e=a_E$  such that $a\ind_e M$. It follows that $a\nind_A B$. By assumption, there is some stable formula $\delta(x,y)\in L$ and some tuple $b\in B$  such that $\models \delta(a,b)$  and  $\delta(x,b)$  forks over $A$.  Consider the type $p(x)=\tp_\delta(a/M)$.   It has a definition $d_px\delta(x,y)$  which is a boolean combination of formulas of the form $\delta(m,y)$  for some tuples $m\in M$.  The definition is, therefore, an instance of a stable formula.  Note that $p(x)$  does not fork over $e$ and hence  its canonical basis  $d_F\in\e{M}$  is  in $\acli(e)$.  For some   $\chi(w,y)\in \e{L}$,  $\chi(d_F,y)$  defines $p(x)$.    Since  $d_px\delta(x,y)\equiv \chi(d_F,y)$, by item 3 of Lemma~\ref{0} for some $\mu(w)\in \tp(d_F)$, the formula $\chi(w,y)\wedge \mu(y)$  is stable.  Without loss of generality then $\chi(w,y)$ is stable.  Note that since $\delta(x,b)\in p(x)$,  $\models \chi(d_F,b)$.

\emph{Claim 1}:  If $q(w)=\tp(d_F)$, then $q(w)\cup\{\chi(w,b)\}$  forks over $A$.

\noindent
\emph{Proof}:  Assume not.  We will prove that $\delta(x,b)$ does not divide over $A$, which is a contradiction.  Let $(b_i\mid i<\omega)$ be an $A$-indiscernible sequence of tuples $b_i\equiv_A b$ and let us check that $\{\delta(x,b_i)\mid i<\omega\}$ is consistent.  By the saturation of $M$, we may assume that $b_i\in M$  for all  $i<\omega$. By our assumption in the proof, $q(w)\cup\{\chi(w,b_i)\mid i<\omega\}$ is consistent and hence we can find some realization $d^\prime_F\in\e{M}$ of this set of formulas.  Since $d_F\equiv d^\prime_F$, there is some  sequence $(b^\prime_i\mid i<\omega)$ in $\e{M}$  such that    $d_F (b^\prime_i\mid i<\omega)\equiv d^\prime_F (b_i\mid i<\omega)$.  Then  $\models \chi(d_F,b^\prime_i)$ for every $i<\omega$, which implies $\delta(x,b^\prime_i)\in p(x)$  and  $\models \delta(a,b^\prime_i)$  for all $i<\omega$.  Since $\{\delta(x,b^\prime_i)\mid i<\omega\}$ is consistent, $\{\delta(x,b_i)\mid i<\omega\}$ is consistent too.\\

With Claim 1 we can now choose some $\mu(w)\in q(w)$  such that  $\chi(w,b)\wedge \mu(w)$  forks over $A$.  Note that $\chi^\prime(w,y)= \chi(w,y)\wedge \mu(w)$  is stable.  Since $\chi^\prime(w,b)$ divides over $A$, this can be witnessed by an $A$-indiscernible sequence $(b_i\mid i<\omega)$ with $b_i\equiv_A b$  and some $k<\omega$  for which $\{\chi^\prime(w,b_i)\mid i<\omega\}$ is  $k$-inconsistent.  Since $d_F\in \acli(e)$, there is  some formula $\theta(v,w)\in\e{L}$  and some $n<\omega$  such that  $\models\theta (e,d_F)$  and $\theta(v,w)\vdash \exists^{=n}w\theta(v,w)$. Let $\varphi(v,y) =\exists w(\theta(v,w)\wedge \chi^\prime(w,y))$.    By Lemma~\ref{1}, $\varphi(v,y)$ is stable.  Since  $\models \varphi(e,b)$, it only remains to check that $\varphi(v,b)$  forks over $A$. This is done in the next claim.\\

\emph{Claim 2}: $\varphi(v,b)$  divides over $A$ with respect to  $l= n(k-1)+1$, witnessed by  $(b_i\mid i<\omega)$.

\noindent
\emph{Proof}:  Otherwise $\{\varphi(v,b_i)\mid i<\omega\}$  is consistent  and it is realized by some $e^\prime\in\e{\c}$.  For each $i<l$ choose some $d^i_F$  such that $\models  \theta(e^\prime,d^i_F)\wedge \chi^\prime(d^i_F, b_i)$.   The number of $d^i_F$  is  $\leq n$ and therefore, by choice of $l$, the mapping  $i\mapsto d^i_F$  has some fiber of cardinality $\geq k$. This shows that $\{\chi^\prime(w,b_i)\mid i<\omega\}$  is $k$-consistent, a contradiction with the choice of $k$.
\end{proof} 

\begin{prop}\label{4} If $\e{T}$  has stable forking over real parameters, then $\e{T}$ has stable forking.
\end{prop}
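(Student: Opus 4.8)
The plan is to reduce, by the independence calculus, the case of imaginary base and parameters to a single application of Proposition~\ref{3}, and then to transport the resulting stable formula back to the original data. Two ingredients drive the argument. The first is a \emph{base-change} observation: if $\psi(x,d)$ forks over a real tuple $a$, if $A\subseteq\dcli(a)$, and if $a\ind_A d$, then $\psi(x,d)$ already forks over $A$. (Indeed, were $\psi(x,d)$ non-forking over $A$ it would be realized by some $c'\ind_A d$; extending $\tp(c'/Ad)$ to a nonforking type over $Ada$ produces $c''\equiv_{Ad}c'$ with $c''\ind_A da$, whence $c''\ind_{Aa}d$, i.e. $c''\ind_a d$ since $A\subseteq\dcli(a)$; as $\models\psi(c'',d)$ this contradicts that $\psi(x,d)$ forks over $a$.) The second ingredient is that the canonical base of a \emph{stable} $\delta$-type over a model is a finitary imaginary lying in that model, which is what will let us relocate the parameter of the stable formula into $B$.

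First I would apply item~4 of Remark~\ref{0} to the theory $\e{T}$, so as to assume the parameter set is a model $M\models\e{T}$; its real part $N$ is then a model of $T$ with $M=\e{N}=\dcli(N)$. So let $A\subseteq M$, let $c$ be an imaginary with $c\nind_A M$, and choose a real tuple $a$ with $A\subseteq\dcli(a)$ and $a\ind_A Mc$ (take real representatives of the elements of $A$ and apply extension). From $a\ind_A Mc$ the pair lemma yields $a\ind_M c$, so $c\ind_M a$; and since $c\ind_a M$ is the same as $c\ind_{Aa}M$, which together with $a\ind_A M$ would give $ca\ind_A M$ and hence $c\ind_A M$, we must have $c\nind_a M$. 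Finally, because $M=\dcli(N)$ we have $Ma\subseteq\dcli(Na)$, so $c\ind_a N$ would force $c\ind_a M$; thus $c\nind_a N$ as well.

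Next I would apply Proposition~\ref{3} to the real sets $a\subseteq Na$ and the type $\tp(c/Na)$, which forks over $a$: this produces a stable $\delta(x,y)\in\e{L}$ (with $y$ real) and a real tuple $e_0\in Na$ such that $\models\delta(c,e_0)$ and $\delta(x,e_0)$ forks over $a$. Now pass to the $\delta$-type $\tp_\delta(c/M)$ over the model $M$; by item~5 of Remark~\ref{0} it is defined over a canonical base $g=\Cb(\tp_\delta(c/M))\in M$. Recall the general description of forking for a stable formula: a $\delta$-type defined over a parameter $g$ forks over a set $C$ exactly when $g\notin\acli(C)$. Since $c\ind_M a$, the type $\tp_\delta(c/Ma)$ is the nonforking extension of $\tp_\delta(c/M)$, defined over the same $g$; it contains $\delta(x,e_0)$ (as $e_0\in Na\subseteq Ma$) and hence forks over $a$, so $g\notin\acli(a)$. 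Applying the criterion again to $\tp_\delta(c/M)$, which is also defined over $g$, we conclude that $\tp_\delta(c/M)$ itself forks over $a$.

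By the finite character of forking, some formula of $\tp_\delta(c/M)$ forks over $a$; such a formula is a finite boolean combination $\varphi(x,\bar m)$ of instances $\delta(x,m_i)$ with $\bar m\in M$, it satisfies $\models\varphi(c,\bar m)$, and it is stable by item~1 of Remark~\ref{0}. Since $\bar m\in M$ and $a\ind_A M$ we have $a\ind_A\bar m$, so the base-change observation converts ``forks over $a$'' into ``forks over $A$''. Thus $\varphi(x,\bar m)$ is a stable formula with parameters $\bar m\in B=M$, lying in $\tp(c/B)$ and forking over $A$, as required. The main obstacle is exactly the passage in the third step: Proposition~\ref{3} only returns a \emph{real} parameter $e_0$, which may involve the auxiliary tuple $a$ and so need not lie in $B$. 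It is the canonical-base relocation over the model $M$ — legitimate precisely because $\delta$ is stable, so that $g$ is an honest imaginary of $M$, and usable because $c\ind_M a$ forces $\tp_\delta(c/M)$ to inherit the forking over $a$ — that forces a genuine parameter in $B$; the return from base $a$ to base $A$ is then automatic from $a\ind_A M$.
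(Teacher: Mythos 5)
Your argument follows the paper's own proof quite closely: real representatives of $A$ plus extension, one application of the real-parameter hypothesis over $Na$, then relocation of the forking witness via the canonical base $g$ of $\tp_\delta(c/M)$, using $c\ind_M a$ to see that the $g$-definable global $\delta$-type contains $\delta(x,e_0)$, hence forks over $a$, whence $g\notin\acli(a)$. Up to that point everything is sound (one labelling slip: what you invoke is not Proposition~\ref{3}, whose hypothesis ``$T$ has stable forking'' is not available here, but the hypothesis of the proposition being proved, which is verbatim the statement you apply). The genuine gap is the next step, where you ``apply the criterion again'' to conclude that $\tp_\delta(c/M)$ itself forks over $a$. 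The criterion of item 5 of Remark~\ref{0} is stated, and is only true, for base sets contained in the model over which the $\delta$-type lives; your $a$ is not contained in $M$ (on the contrary, $a\ind_A Mc$). The restriction is not cosmetic. The direction you need --- $g\notin\acli(C)$ implies forking over $C$ --- requires that \emph{every} global extension of $\tp_\delta(c/M)$ fork over $C$; when $C\subseteq M$ this holds because a global extension nonforking over $C$ is, by base monotonicity, nonforking over $M$ and hence equal to the $g$-definable type $\g{p}$, but when $C\not\subseteq M$ base monotonicity is unavailable and a nonforking-over-$C$ extension may be a different one, e.g.\ a realized one. Concretely, in an algebraically closed field let $g\in M$ be the (generic) parameter of a circle, let $p(x)\in S(M)$ be the type of a generic point of that circle, so that the global nonforking extension of $p$ is definable over $g$, and let $b\models p$: then $p$ does not fork over $\{b\}$, since the realized global type $x=b$ extends $p$ and does not fork over $\{b\}$, yet $g\notin\acli(b)$. (The opposite direction, which you used just before to get $g\notin\acli(a)$, is valid for arbitrary base sets because it factors through the global type $\g{p}$; that is exactly the asymmetry.)

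The repair is short and is what the paper does. Since $A\subseteq\dcli(a)$, the fact $g\notin\acli(a)$ already gives $g\notin\acli(A)$; and $A$ \emph{is} a subset of the model $M$ of $\e{T}$, so item 5 of Remark~\ref{0} applies legitimately and yields that $\tp_\delta(c/M)$ forks over $A$. By finite character some finite conjunction $\varphi(x,\bar m)$ of formulas of $\tp_\delta(c/M)$, with $\bar m\in M$, forks over $A$; it is an instance of a stable formula by item 1 of Remark~\ref{0}, and $\models\varphi(c,\bar m)$. With this rearrangement your detour through ``forking over $a$'' disappears and the base-change observation becomes unnecessary --- note in any case that its stated hypothesis $a\ind_A d$ is never used in your proof of it, and that in a simple theory the observation is immediate, since forking equals dividing and any sequence witnessing dividing over $a$ is $A$-indiscernible and consists of $A$-conjugates whenever $A\subseteq\dcli(a)$.
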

\begin{proof} By item 4 of Remark~\ref{0}, it is enough to consider types over models.  Assume $e\nind_A \e{M}$, where $M\subseteq \c$ is a model, $\e{M}=\dcli(M)$ is the corresponding imaginary model, $A\subseteq \e{M}$  and $e\in \e{\c }$.  Choose a set $A^\prime$ of representatives of the elements of $A$  such that $A^\prime\ind_A Me$. Then $e\nind_{A^\prime} M$ and by the assumption there is some stable formula  $\delta(v,y)\in\e{L}$  and some tuple $a\in A^\prime M$  such that   $\models \delta(e,a)$  and $\delta(v,a)$  forks over $A^\prime$. Let $p(v)=\tp_\delta(e/M)$  and let $c$ be its canonical base.  Since $e\ind_M A^\prime$, the unique global $\delta$-type  $\g{p}(v)\supseteq p(v)$  which is definable over $c$   extends  $\tp_\delta(e/A^\prime M)$.  Since $c$ is the canonical base of $\g{p}$ and $\g{p}$ forks over $A^\prime$, $c\not\in\acli(A^\prime)$. It follows that $c\not\in \acli(A)$.  Hence  $p (v)$    forks over $A$.  Let  $\varphi(v,b)$  be a finite conjunction of formulas of $p(v)$ which forks over $A$.  Since $\varphi(v,b)$  is  a  conjunction of $\delta$-formulas, it is an instance of a stable formula.  Moreover, $\models\varphi(e,b)$.
\end{proof}

\begin{cor}\label{5} $T$ has stable forking if and only if $\e{T}$ has stable forking.
\end{cor}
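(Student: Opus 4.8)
The plan is to prove the two implications separately, deriving the forward direction almost for free from the work already done and reserving the (routine but genuine) argument for the converse. For the direction from $T$ to $\e{T}$ I would simply chain the two propositions: assuming $T$ has stable forking, Proposition~\ref{3} yields that $\e{T}$ has stable forking over real parameters, and Proposition~\ref{4} then upgrades this to full stable forking for $\e{T}$. No additional argument is needed on this side.

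For the converse, suppose $\e{T}$ has stable forking and let $p(x)\in S(B)$ be a real type over a set $B$ of real elements that forks over a real subset $A\subseteq B$. The first thing to record is that forking of a real type over real parameters agrees whether computed in $T$ or in $\e{T}$; hence $p$, regarded as a type in $\e{T}$, still forks over $A$. Applying the stable forking of $\e{T}$ produces a stable formula $\varphi(x,y)\in\e{L}$ and a tuple $b\in B$ with $\varphi(x,b)\in p$ and $\varphi(x,b)$ forking over $A$. The crucial observation is that, since $B$ consists of real elements, the tuple $b$ is real, so the variable $y$, like $x$, is of a real (home) sort.

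The only delicate point, and what I expect to be the main obstacle, is the descent from $\e{L}$ back to $L$. Here I would invoke the standard fact that any $\e{L}$-formula all of whose free variables lie in real sorts is $\e{T}$-equivalent to an $L$-formula; applied to $\varphi(x,y)$, this gives an $L$-formula $\varphi_0(x,y)$ defining the same relation on real tuples. Since (in)stability depends only on the defined relation, $\varphi_0$ inherits stability from $\varphi$, and plainly $\varphi_0(x,b)\in p$ with $\varphi_0(x,b)$ forking over $A$. This exhibits a stable $L$-formula witnessing forking of $p$ over $A$, so $T$ has stable forking. Combining the two directions establishes the equivalence.

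One should double-check that the two background facts used in the converse — the agreement of forking on real types between $T$ and $\e{T}$, and the reduction of real-sorted $\e{L}$-formulas to $L$-formulas — are available in the stated generality; both are standard properties of $T^{\mathrm{eq}}$, and neither requires more than what is already assumed, so I would merely cite them rather than reprove them.
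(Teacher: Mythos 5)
Your proposal is correct and follows exactly the paper's route: the forward direction by chaining Propositions~\ref{3} and~\ref{4}, and the converse by the equivalence of $\e{L}$-formulas with real free variables to $L$-formulas (which the paper states in one line and you merely spell out, including the standard facts that forking of real types over real parameters and stability of real-sorted formulas transfer between $T$ and $\e{T}$). No discrepancies to report.
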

\begin{proof}  One direction follows from propositions~\ref{3} and~\ref{4}.  The rest is clear since $\e{L}$-formulas with real free variables are equivalent to $L$-formulas.
\end{proof}

\section{An example and some open problems}

We describe a theory $T$.  Its language  contains two binary relation  symbols $E,F$, both  are being interpreted as equivalence relations on the universe with some specific cross-cutting.  The equivalence relation $E$  has infinitely many classes, all infinite. On the other hand  $F$  has exactly one class of size $n$  for every $n\geq 1$, say consisting in the elements $a^n_1,\ldots, a^n_n$.  For each  $k\geq 1$, the elements  $a_k^k, a_k^{k+1},\ldots$  build an $E$-class.  With these specifications, the  set $\{a_k^n\mid  1\leq k\leq n<\omega\}$  is the universe of a model $M$ of $T$.   Note that $\dcl(\emptyset)=M$.   The formula  $E(x,y)$  has non finite cover property  and  $F(x,y)$  is  stable.  But  $\exists y(E(x,y)\wedge F(y,z))$  is unstable, as witnessed by the sequences  $(a^i_i\mid  i\geq 1)$ and $(a^j_1\mid j\geq 1)$.  This answers a question of M. C. Laskowski:  Lemma~\ref{1} can not be generalized  to the case where $\theta(v,x)$ is a non finite cover property formula.  On the other hand, it shows that the proof of Proposition~\ref{3} can not be carried out trying to prove that  the formula $\exists x( \pi_E(x)= v\wedge \delta(x,y))$  is stable  (where $\pi_E$ is the mapping sending each tuple to its $E$-equivalence class).  Since $T$ is interpretable in Presburger arithmetic, it   is  dp-minimal. But $T$    has the strict order property, hence it is not simple. This can be checked observing that the  $E$-class of $a_{11}$ is infinite and has a definable linear ordering.  It would be interesting to find a similar example in a simple theory $T$.

A. Chernikov  has raised  the question of whether Corollary~\ref{5}  can be generalized to  dependent forking.  See~\cite{Chernikov12}  for the relevant definitions.

Let us  finally mention a connected question asked by M. Ziegler.  Assume all $1$-types in $T$ have stable forking. Does it follow that $T$ has stable forking?  A  positive answer would be very helpful.

\bibliographystyle{abbrv}

\end{document}